\documentclass{article}

\usepackage{amsmath, mathrsfs, amssymb,amsthm,hyperref,tikz,tikz-3dplot}
\hypersetup{pdfstartview={XYZ null null 1.25}}
\usepackage[all]{xy}

\theoremstyle{plain}

\newtheorem{theorem}{Theorem}[section]{\bfseries}{\itshape}
\newtheorem{proposition}[theorem]{Proposition}{\bfseries}{\itshape}
\newtheorem{definition}[theorem]{Definition}{\bfseries}{\upshape}
\newtheorem{lemma}[theorem]{Lemma}{\bfseries}{\upshape}
{\bfseries}{\upshape}
\newtheorem{corollary}[theorem]{Corollary}{\bfseries}{\upshape}
{\bfseries}{\upshape}
{\bfseries}{\upshape}
{\bfseries}{\upshape}

\newcommand{\bw}{\bigwedge}
\newcommand{\bv}{\bigvee}
\newcommand{\UPPi}{\prod_U P_i}

\title{No finite axiomatizations for posets embeddable into distributive lattices}
\author{Rob Egrot}
\date{}

\begin{document}
\maketitle
\begin{abstract}
Let $m$ and $n$ be cardinals with $3\leq m,n\leq\omega$. We show that the class of posets that can be embedded into a distributive lattice via a map preserving all existing meets and joins with cardinalities strictly less than $m$ and $n$ respectively cannot be finitely axiomatized.    
\end{abstract}

\section{Introduction}
Let $m$ and $n$ be cardinals with $3\leq m,n\leq\omega$. It is shown in \cite{VanA16} that the problem of deciding whether a given finite poset can be embedded into a distributive lattice via a map preserving existing meets and joins with cardinalities strictly less than $m$ and $n$ respectively is $\mathbf{NP}$-complete for all $m$ and $n$ except, possibly, the case where both $m$ and $n$ are equal to 3. By \cite[Proposition 3.1]{Var95}, polynomial time algorithms exist for checking whether a fixed first-order sentence holds in finite models. So, if a class of posets with this kind of embedding property for some suitable $m$ and $n$ were finitely axiomatizable, it would imply that $\mathbf{P}=\mathbf{NP}$. Needless to say, this implication strongly suggests that none of these classes is finitely axiomatizable. However, intuitive finite first-order axiomatizations do exist for semilattices in similar situations \cite{Bal69,Sch72}.   

Assuming finiteness, or a suitable choice principle, this problem of embedding posets into distributive lattices is equivalent to the problem of embedding posets into powerset algebras via maps preserving meets and joins smaller than specified cardinals $m$ and $n$. Note that, since $m$ and $n$ are greater than $2$, such an embedding will automatically preserve any relative complements that exist in the poset. This has been studied in \cite{Egr17,Egr16} using the terminology $(m,n)$-\emph{representable} (see Definition \ref{D:rep}). In particular, it was shown that all the classes where $m,n\leq\omega$ are elementary \cite[Theorem 4.5]{Egr16}, though explicit axioms are not known. In the cases where either $m$ or $n$ is equal to $\omega$, the corresponding class is not finitely axiomatizable. This was shown directly in \cite{Egr16}, and also follows from the corresponding result for semilattices \cite{Kea97}. However, the cases where $m$ and $n$ are both finite were left open. 

Since for $3\leq m,n\leq\omega$ the classes of $(m,n)$-representable posets are all elementary, they will be finitely axiomatizable if and only if their complements are elementary. By \L o\'s' theorem, these complements will be elementary only if they are closed under ultraproducts.

For a poset $P$ and cardinals $\alpha$ and $\beta$, the existence of an $(\alpha,\beta)$-representation for $P$ is equivalent to a separation property generalizing the separation of distributive lattices by prime filters (the Prime Ideal Theorem for distributive lattices). In this note we use this property to construct a sequence of finite posets, all of which fail to be $(3,3)$-representable, and an ultraproduct of this sequence which is $(\omega,\omega)$-representable, thus proving that the class of $(m,n)$-representable posets cannot be finitely axiomatizable for any choice of $n,m\geq 3$. 

The classes of $(\alpha,\beta)$-representable posets, when $\alpha$ and/or $\beta$ are uncountable, and the classes where \emph{all} meets and/or joins must be preserved, are known to not be elementary at all, though in some cases they are pseudoelementary. See \cite[Figure 2]{Egr17} for a summary. 

In Section \ref{S:reps} we introduce the basic notation, definitions and results for representable posets (using the notation of \cite{Egr16}). Finally in Section \ref{S:result} we construct the required sequence of posets and prove the necessary results to support our main claim.

\section{Representable posets}\label{S:reps}
We begin with some notational conventions. Given a poset $P$ and a subset $S\subseteq P$ we define $S^\uparrow =\{p\in P:p\geq q$ for some $q\in S\}$. Given $p\in P$ we define $p^\uparrow=\{p\}^\uparrow$. Given a set $I$, an ultrafilter $U$ of $\wp(I)$, and posets $P_i$ for $i\in I$ we let $\UPPi$ be the ultraproduct with respect to $U$. For an element of $\UPPi$ we write, e.g. $[x]\in\UPPi$.

\begin{definition}[$(\alpha,\beta)$-representable]\label{D:rep}
Let $\alpha$ and $\beta$ be cardinals. We say a poset $P$ is $(\alpha,\beta)$-representable if there is a field of sets $F$, and a 1-1 map $h:P\to F$ such that:
\begin{enumerate}
\item Whenever $S$ is a subset of $P$ with $|S|<\alpha$, if $\bw S$ exists in $P$, then $h(\bw S)=\bigcap h[S]$.
\item Whenever $T$ is a subset of $P$ with $|T|<\beta$, if $\bv T$ exists in $P$ then $h(\bv T)=\bigcup h[T]$.
\end{enumerate}
If $\alpha=\beta$ we just write $\alpha$-\emph{representable}.
\end{definition}

\begin{definition}[$(\alpha,\beta)$-filter]
Let $\alpha$ and $\beta$ be cardinals, let $P$ be a poset, and let $\Gamma$ be an up-closed subset of $P$. We say $\Gamma$ is an $(\alpha,\beta)$-filter if:
\begin{enumerate}
\item Whenever $S\subseteq\Gamma$ and $|S|<\alpha$, if $\bw S$ exists, then $\bw S\in \Gamma$.
\item Whenever $T\subseteq P$ with $|T|<\beta$, if $\bv T$ exists and $\bv T\in \Gamma$, then $T\cap \Gamma\neq\emptyset$.
\end{enumerate}
I.e. $\Gamma$ is both $\alpha$-complete and $\beta$-prime. If $\alpha=\beta$ we just write $\alpha$-\emph{filter}.
\end{definition}

The following result relates $(\alpha,\beta)$-representability to separation by $(\alpha,\beta)$-filters. It appears explicitly in this form as \cite[Theorem 2.7]{Egr16}, but the idea of using this kind of separation property for representability-like results for ordered structures has been in the literature for over 50 years (see e.g. \cite[Theorem 4]{ChaHor62}). This also arises in pointless topology, as separation by completely prime filters is equivalent to a frame being \emph{spatial}, i.e. isomorphic to the open set lattice of some topological space (see e.g. \cite[Section 3]{John83}). 

\begin{theorem}\label{T:rep}
Let $\alpha$ and $\beta$ be cardinals, and let $P$ be a poset. Then $P$ is $(\alpha,\beta)$-representable if and only if, for all $p,q\in P$, if $p\not\leq q$, then there is an $(\alpha,\beta)$-filter $\Gamma\subset P$ with $p\in\Gamma$ and $q\notin \Gamma$.
\end{theorem}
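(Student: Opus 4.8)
The plan is to prove the biconditional in the standard two directions, treating the "separation implies representability" direction as the substantial one.

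\medskip

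\textbf{Plan of proof.} First I would handle the easy direction: suppose $P$ is $(\alpha,\beta)$-representable via $h\colon P\to F$ for some field of sets $F$ over a base set $X$. Given $p\not\leq q$ in $P$, injectivity and order-preservation (which follows from conditions (1)--(2) applied to two-element meets/joins, or more simply from the fact that $h$ is an embedding onto its image) give $h(p)\not\subseteq h(q)$, so pick a point $x\in h(p)\setminus h(q)$ and set $\Gamma=\{r\in P: x\in h(r)\}$. One then checks $\Gamma$ is up-closed, $\alpha$-complete, and $\beta$-prime: up-closure is immediate; $\alpha$-completeness uses condition (1), since if $S\subseteq\Gamma$ with $|S|<\alpha$ and $\bigwedge S$ exists then $h(\bigwedge S)=\bigcap h[S]\ni x$; $\beta$-primeness uses condition (2) dually, since if $\bigvee T\in\Gamma$ with $|T|<\beta$ then $x\in h(\bigvee T)=\bigcup h[T]$, so $x\in h(t)$ for some $t\in T$. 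Thus $p\in\Gamma$, $q\notin\Gamma$, as required.

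\medskip

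For the converse, assume the separation property and build a representation. Let $X$ be the set of all $(\alpha,\beta)$-filters of $P$, and define $h\colon P\to\wp(X)$ by $h(p)=\{\Gamma\in X: p\in\Gamma\}$. Let $F$ be the field of sets generated by $h[P]$ inside $\wp(X)$ (so $F$ is automatically a field of sets; alternatively one can just take $F=\wp(X)$, since Definition~\ref{D:rep} only requires $F$ to be \emph{a} field of sets containing the image). The separation hypothesis is exactly what makes $h$ injective: if $p\neq q$ then WLOG $p\not\leq q$, so some $(\alpha,\beta)$-filter contains $p$ but not $q$, witnessing $h(p)\neq h(q)$. It remains to verify conditions (1) and (2) of Definition~\ref{D:rep}.

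\medskip

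The main work — and the step I expect to be the real obstacle — is verifying condition (1): if $S\subseteq P$, $|S|<\alpha$, and $\bigwedge S$ exists, then $h(\bigwedge S)=\bigcap h[S]$. The inclusion $h(\bigwedge S)\subseteq\bigcap h[S]$ is trivial from up-closure of filters (each $\Gamma$ containing $\bigwedge S$ contains every $s\geq\bigwedge S$). For the reverse inclusion one must show that any $(\alpha,\beta)$-filter $\Gamma$ containing all of $S$ actually contains $\bigwedge S$ — and this is precisely the $\alpha$-completeness clause in the definition of $(\alpha,\beta)$-filter, so it is immediate. Dually, condition (2): $h(\bigvee T)\supseteq\bigcup h[T]$ is trivial by up-closure, and $h(\bigvee T)\subseteq\bigcup h[T]$ says every $(\alpha,\beta)$-filter containing $\bigvee T$ (when $|T|<\beta$) meets $T$, which is exactly the $\beta$-primeness clause. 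So in fact, once the definitions are unwound, \emph{both} directions are essentially bookkeeping, and the only genuinely nontrivial point is the injectivity of $h$ in the converse direction, which is handed to us by the hypothesis; the care required is just in checking that the constructed $h$ lands in a genuine field of sets and that the finite/small meets and joins behave correctly under the cardinality bounds. I would therefore present the converse construction in full detail and dispatch the forward direction quickly.
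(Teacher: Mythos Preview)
Your argument is correct and is the standard Stone-type construction one expects here. Note, however, that the paper does not actually prove this theorem: it is stated with a reference to \cite[Theorem 2.7]{Egr16} (and historical precedents) and no proof is given in the present paper. So there is nothing to compare against beyond saying that what you wrote is exactly the kind of proof the cited reference contains.

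One small point worth tightening in your forward direction: from Definition~\ref{D:rep} as literally written (a 1--1 map preserving small meets and joins), order-\emph{reflection} is not entirely automatic in a bare poset where $p\wedge q$ need not exist. In the source \cite{Egr16} the map is taken to be an order-embedding, and your parenthetical ``more simply from the fact that $h$ is an embedding onto its image'' is the right reading; just make that explicit rather than deriving $h(p)\not\subseteq h(q)$ from meet-preservation alone.
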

Of course there is a dual result stated in terms of ideals rather than filters, and the details of this can also be found in \cite[Section 2]{Egr16}.

The next lemma shows how we can translate the existence of certain $(m,n)$-filters in coordinate posets into the existence of a certain $(m,n)$-filter in their ultraproduct. It will play an important role in proving $(m,n)$-representability for our ultraproduct. 
\begin{lemma}\label{L:filters}
Let $I$ be a set, and let $U$ be a non-principal ultrafilter of $\wp(I)$. Let $3\leq m,n\leq\omega$. For each $i\in I$ let $P_i$ be a poset, and let $[a],[b] \in\UPPi$. Let $u\in U$ and suppose that for all $i\in u$ there is an $(m,n)$-filter, $\Gamma_i$, of $P_i$ with $a(i)\in\Gamma_i$ and $b(i)\not\in\Gamma_i$. Then there is an $(m,n)$-filter, $\Gamma$, of $\UPPi$ with $[a]\in\Gamma$ and $[b]\notin\Gamma$. 
\end{lemma}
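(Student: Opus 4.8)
The plan is to build $\Gamma$ directly from the coordinatewise filters $\Gamma_i$ by defining $\Gamma = \{[x] \in \UPPi : \{i \in u : x(i) \in \Gamma_i\} \in U\}$. One first checks this is well-defined: if $[x]=[y]$ then $x$ and $y$ agree on a set in $U$, and intersecting with the witnessing set shows membership in $\Gamma$ is independent of the representative. That $[a]\in\Gamma$ is immediate since $u$ itself (or $u$ intersected with the domain of definition) witnesses it, and $[b]\notin\Gamma$ follows because $\{i\in u : b(i)\in\Gamma_i\}=\emptyset\notin U$. Up-closure is routine: if $[x]\leq[y]$ in the ultraproduct then $x(i)\leq y(i)$ on a set in $U$, and since each $\Gamma_i$ is up-closed we get $y(i)\in\Gamma_i$ wherever $x(i)\in\Gamma_i$ holds and the order holds, which is still a set in $U$.

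The substantive part is verifying the two filter conditions, and the key mechanism is the standard fact that because $m,n\leq\omega$ we only ever deal with finite sets $S$ or $T$. For $m$-completeness: suppose $S\subseteq\Gamma$ is finite and $\bw S$ exists in $\UPPi$; write $S=\{[x_1],\dots,[x_k]\}$. The crucial point is that existence of a meet in the ultraproduct, for a \emph{finite} set, transfers back: the set of $i$ for which $x_1(i)\wedge\cdots\wedge x_k(i)$ exists in $P_i$ and equals $(\bw S)(i)$ lies in $U$ — this should already be available from the theory of ultraproducts of posets in \cite{Egr16}, or can be argued from {\L}o\'s' theorem applied to the first-order expressible statement "$z$ is the infimum of $x_1,\dots,x_k$". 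Intersecting this set with the $k$ sets witnessing $[x_j]\in\Gamma$ and with $u$ gives a set in $U$ on which each $x_j(i)\in\Gamma_i$ and $\bw\{x_j(i)\}$ exists and equals $(\bw S)(i)$; by $m$-completeness of $\Gamma_i$ we get $(\bw S)(i)\in\Gamma_i$ there, so $\bw S\in\Gamma$. The $n$-primeness condition is dual in flavour: given finite $T=\{[y_1],\dots,[y_k]\}\subseteq\UPPi$ with $\bv T$ existing and $\bv T\in\Gamma$, pass to a set in $U$ on which $\bv\{y_j(i)\}$ exists and equals $(\bv T)(i)$ and lies in $\Gamma_i$; by $n$-primeness of $\Gamma_i$, for each such $i$ some $j$ has $y_j(i)\in\Gamma_i$. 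Now a pigeonhole argument over the finitely many indices $j\in\{1,\dots,k\}$, using that $U$ is an ultrafilter, produces a single $j$ for which $\{i : y_j(i)\in\Gamma_i\}\in U$, i.e. $[y_j]\in\Gamma$, so $T\cap\Gamma\neq\emptyset$.

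The main obstacle I anticipate is the transfer of meet/join \emph{existence} along the ultraproduct: one must be careful that "$\bw S$ exists in $\UPPi$" really does imply "$\bw\{x_j(i)\}$ exists in $P_i$ for $U$-many $i$". Because $S$ is finite this is a genuinely first-order statement ($\exists z\, (z\leq x_1 \wedge \cdots \wedge z \leq x_k \wedge \forall w((w\leq x_1\wedge\cdots\wedge w\leq x_k)\to w\leq z))$), so {\L}o\'s' theorem applies cleanly; the finiteness of $m$ and $n$ is exactly what makes this work and is presumably why the lemma is stated only for $m,n\leq\omega$. Everything else — well-definedness, up-closure, the pigeonhole step for primeness — is bookkeeping with ultrafilters. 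So I would organize the proof as: (1) define $\Gamma$ and check well-definedness and up-closure; (2) check $[a]\in\Gamma$, $[b]\notin\Gamma$; (3) prove $m$-completeness using {\L}o\'s-style transfer of finite meets plus completeness of the $\Gamma_i$; (4) prove $n$-primeness using transfer of finite joins, primeness of the $\Gamma_i$, and a finite pigeonhole over the join's arguments.
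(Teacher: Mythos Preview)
Your proof is correct, and the set $\Gamma$ you define is exactly the one the paper constructs. The difference is one of packaging: rather than verifying up-closure, $m$-completeness, and $n$-primeness by hand, the paper expands the language of posets by a unary predicate $G$, interprets $G$ in each $P_i$ (for $i\in u$) as membership in $\Gamma_i$, and then observes that the statement ``$\{p:G(p)\}$ is an $(m,n)$-filter'' is first-order (precisely because $m,n\leq\omega$). A single application of \L o\'s' theorem then gives that $\Gamma=\{[c]:G([c])\}$ is an $(m,n)$-filter, and the facts $[a]\in\Gamma$, $[b]\notin\Gamma$ come for free from the ultraproduct semantics of $G$. Your hands-on argument unpacks exactly this, invoking \L o\'s only for the transfer of finite meet/join existence and doing the pigeonhole step for primeness explicitly; the paper's route avoids all of that bookkeeping by pushing the entire filter axiomatization through \L o\'s at once.
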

\begin{proof}
Let $\mathcal L$ be the standard language of posets extended by the single unary predicate symbol $G$. In every poset $P_i$ with $i\in u$ we interpret this predicate using
\begin{equation*}
P_i\models G(p)\iff p\in \Gamma_i
\end{equation*}  
Then, by the definition of ultraproducts, we have $\UPPi\models G([a])$ and $\UPPi\not\models G([b])$. For every $i\in u$ we have $\{p\in P_i: G(p)\} = \Gamma_i$, and so $\{p\in P_i: G(p)\}$ is thus an $(m,n)$-filter. So $P_i$ satisfies the set of first-order sentences ensuring $\{p\in P_i: G(p)\}$ is an $(m,n)$-filter for all $i\in u$. Thus, by \L o\'s' theorem, $\{[c]\in \UPPi : G([c])\}$ is also an $(m,n)$-filter, and, since $G([a])$ but not $G([b])$, we are done. 
\end{proof}

\section{Non-finite axiomatizability}\label{S:result}
We construct a sequence $(P_k:k=0,1,2,...)$ of finite posets. Each of these posets fails to be $3$-representable (and thus fails to be $(m,n)$-representable for all $m,n\geq 3$), but as $k$ increases the posets become, in a sense, closer to being 3-representable. We then show that an ultraproduct of these posets is $\omega$-representable (and so is $(m,n)$-representable for all $3\leq m,n\leq\omega$). This shows that, for all $3\leq m,n \leq \omega$, the complement of the class of $(m,n)$-representable posets is not elementary, and thus that the class of $(m,n)$-representable posets cannot be finitely axiomatized. 

In order to construct $P_k$ we first recursively define the sets $N_n$ for $n\in\omega$ by
\begin{itemize}
\item $N_0=\{a,b,c,d\}$.
\item Given $N_n$ we define $N_{n+1}=\{e_{xy}:x$ and $y$ are distinct elements of $N_n\}$. I.e. we get an element of $N_{n+1}$ for every distinct pair of elements in $N_n$.
\end{itemize}

Then for all $n\in\omega$ we define 
\begin{equation*}\widehat{N}_n=\bigcup_{x\in N_n}\{x',x''\}\end{equation*}

Given $k<\omega$ we define the carrier of $P_k$ to be
\begin{equation*}
\{p,q\}\cup\bigcup_{n=0}^k N_n \cup \bigcup_{n=0}^k \widehat{N}_n
\end{equation*}
We assume, of course, that elements labeled differently are distinct. We define the order on $P_k$ as follows:
\begin{enumerate}
\item $x< p$ for all $x\in N_0=\{a,b,c,d\}$.
\item $x< x'$ and $x< x''$ for all $x\in N_n$ and for all $n\leq k$.
\item For all $1\leq n\leq k$, if $x,y\in N_{n-1}$ and $e_{xy}$ is the corresponding element of $N_n$, we have $e_{xy}< x'$, $e_{xy}<x''$, $e_{xy}<y'$, and $e_{xy}< y''$.
\item For all $x\in N_k$ we have $q<x'$ and $q<x''$.
\item $x\leq x$ for all $x\in P_k$.
\item No other elements are comparable.
\end{enumerate}

Figures \ref{F:P0}-\ref{F:Pk} illustrate the posets $P_0$, $P_1$ and $P_k$. We now prove some facts about $P_k$, from which we deduce that $P_k$ is indeed a poset for all $k\in\omega$, and also that it has certain features that will be useful to us.
\begin{lemma}\label{L:height}
If $x,y,z \in P_k$, and $x\leq y$ and $y\leq z$, then, either $x=y$, or $y=z$.
\end{lemma}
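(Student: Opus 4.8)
The plan is to show that the order on $P_k$ has no strict three‑element chain by splitting the carrier into a set $L$ of ``possible bottoms'' and a set $M$ of ``possible tops'' and observing that every strict comparability goes from $L$ to $M$, while $L$ and $M$ are disjoint.

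First I would simply read off from clauses (1)--(4) of the definition of $\leq$ exactly which pairs $(u,v)$ with $u\neq v$ satisfy $u<v$. Inspecting the clauses, the element that is strictly smaller is always a member of $N_0$, or a member of $N_n$ for some $n\leq k$, or an element $e_{xy}\in N_n$, or $q$; and the element that is strictly larger is always $p$ or one of the primed/double‑primed elements $x',x''$ with $x\in N_n$, $n\leq k$. Setting
\[
L:=\{q\}\cup\bigcup_{n=0}^{k}N_n,
\qquad
M:=\{p\}\cup\bigcup_{n=0}^{k}\widehat{N}_n ,
\]
this says precisely: if $u<v$ and $u\neq v$, then $u\in L$ and $v\in M$. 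Next I would note that $L\cap M=\emptyset$: by the convention that differently labelled elements are distinct, no unprimed symbol equals a primed symbol, so $\bigcup_n N_n$ is disjoint from $\bigcup_n\widehat{N}_n$, and the constants $p,q$ are distinct from each other and from all the other symbols. Since $L\cup M$ is by definition the whole carrier of $P_k$, the pair $\{L,M\}$ is a partition of $P_k$.

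With this in hand the statement is immediate. Suppose for contradiction that $x\leq y$, $y\leq z$, $x\neq y$ and $y\neq z$. From $x<y$ we get $y\in M$, and from $y<z$ we get $y\in L$, contradicting $L\cap M=\emptyset$; hence $x=y$ or $y=z$.

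I do not expect a genuine obstacle here: the only thing that needs care is the bookkeeping in the first step, namely checking clause by clause that each of (1)--(4) really does put its smaller element into $L$ and its larger element into $M$ (and remembering that clause (3) is vacuous when $k=0$). As a side remark, the same partition also gives antisymmetry of $\leq$ at once (a pair $x<y$, $y<x$ with $x\neq y$ would force $x\in L\cap M$), and antisymmetry together with the lemma yields transitivity, which is presumably why this fact is recorded before concluding that $P_k$ is a poset.
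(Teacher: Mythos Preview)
Your proposal is correct and follows essentially the same approach as the paper: both arguments observe that in every strict comparability $u<v$ the larger element lies in $\{p\}\cup\bigcup_n\widehat{N}_n$ and the smaller in $\{q\}\cup\bigcup_n N_n$, so no element can sit strictly in the middle of a chain. Your partition $\{L,M\}$ simply packages this observation more explicitly than the paper's case list, and your closing remark about antisymmetry and transitivity anticipates exactly the corollary the paper draws next.
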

\begin{proof}
If $x\leq y$, then either: (1) $x\in N_0$ and $y=p$, (2) $x\in N_n$ and $y\in \widehat{N}_n$, (3) $x\in N_n$ and $y\in \widehat{N}_{n-1}$, (4) $x=q$ and $y\in \widehat{N}_k$, or (5) $x=y$. We note that $p$ has no upper bound other than itself, and that this is also true for elements of $\widehat{N}_n$ for all $0\leq n\leq k$.
\end{proof}

\begin{corollary}
$P_k$ is a poset for all $k\in\omega$.
\end{corollary}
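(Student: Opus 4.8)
The plan is to check the three defining properties of a partial order for the relation specified by clauses (1)--(6): reflexivity, antisymmetry, and transitivity. Reflexivity is immediate from clause (5). For the other two, the real work has already been packaged into Lemma \ref{L:height}, so the proof should amount to two short deductions from it.

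For transitivity, suppose $x\leq y$ and $y\leq z$ in $P_k$. Lemma \ref{L:height} gives either $x=y$ or $y=z$; in the first case $x\leq z$ follows from $y\leq z$, and in the second case $x\leq z$ follows from $x\leq y$, so $x\leq z$ in all cases. For antisymmetry, suppose $x\leq y$ and $y\leq x$. Applying Lemma \ref{L:height} with $z=x$ yields $x=y$ or $y=x$, i.e.\ $x=y$. Hence the relation is a partial order.

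I do not expect a genuine obstacle here: once Lemma \ref{L:height} is in hand the argument is essentially mechanical, and the only thing needing a moment's thought is already implicit in that lemma, namely that the comparabilities asserted in clauses (1)--(4) are consistent with the closing convention (6). This holds because each of (1)--(4) places an element strictly below another in an evident height stratification (the elements of $N_0$ and $q$ at the bottom, the $N_n$ for increasing $n$ in the middle, and $p$ together with the $\widehat N_n$ on top), so no element is simultaneously declared below and above another. Thus the statement of Lemma \ref{L:height} makes sense and the corollary follows.
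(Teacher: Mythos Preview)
Your proof is correct and follows the paper's approach exactly: reflexivity is immediate from clause (5), and antisymmetry and transitivity are read off from Lemma~\ref{L:height}. One minor inaccuracy in your closing parenthetical (which is tangential to the actual argument): all of the $N_n$, not just $N_0$, consist of minimal elements---there is no ``middle'' layer, as $P_k$ has height~$2$.
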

\begin{proof}
Since reflexivity is automatic, it remains only to check antisymmetry and transitivity, and these follow from Lemma \ref{L:height}
\end{proof}

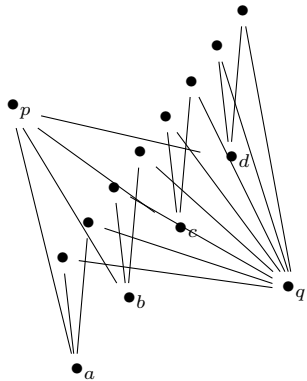
\begin{figure}[htbp]
\centering
\tdplotsetmaincoords{60}{110}
\begin{tikzpicture}[tdplot_main_coords]
\node (p) at (0, -2, 2) {$\phantom{x}\bullet_p$};
\node (d) at (-3,0,0) {$\phantom{x}\bullet_d$};
\node (c) at (-1,0,0) {$\phantom{x}\bullet_c$};
\node (b) at (1,0,0) {$\phantom{x}\bullet_b$};
\node (a) at (3,0,0) {$\phantom{x}\bullet_a$};
\node (d') at (-3.5,0,2) {$\bullet$};
\node (d'') at (-2.5,0,2) {$\bullet$};
\node (c') at (-1.5,0,2) {$\bullet$};
\node (c'') at (-0.5,0,2) {$\bullet$};
\node (b') at (0.5,0,2) {$\bullet$};
\node (b'') at (1.5,0,2) {$\bullet$};
\node (a') at (2.5,0,2) {$\bullet$};
\node (a'') at (3.5,0,2) {$\bullet$};
\node (q) at (0, 2, 0) {$\bullet_q$};

\draw (p)--(a);
\draw (p)--(b);
\draw (p)--(c);
\draw (p)--(d);

\draw (a)--(a');
\draw (b)--(b');
\draw (c)--(c');
\draw (d)--(d');
\draw (a)--(a'');
\draw (b)--(b'');
\draw (c)--(c'');
\draw (d)--(d'');

\draw (q)--(a');
\draw (q)--(b');
\draw (q)--(c');
\draw (q)--(d');
\draw (q)--(a'');
\draw (q)--(b'');
\draw (q)--(c'');
\draw (q)--(d'');
\end{tikzpicture}
\caption{The poset $P_0$}
\label{F:P0}
\end{figure}

\begin{figure}[htbp]
\centering
\tdplotsetmaincoords{60}{110}
\begin{tikzpicture}[tdplot_main_coords]
\node (p) at (0, -2, 2) {$\phantom{x}\bullet_p$};
\node (d) at (-3,0,0) {$\phantom{x}\bullet_d$};
\node (c) at (-1,0,0) {$\phantom{x}\bullet_c$};
\node (b) at (1,0,0) {$\phantom{x}\bullet_b$};
\node (a) at (3,0,0) {$\phantom{x}\bullet_a$};
\node (d') at (-3.5,0,2) {$\bullet$};
\node (d'') at (-2.5,0,2) {$\bullet$};
\node (c') at (-1.5,0,2) {$\bullet$};
\node (c'') at (-0.5,0,2) {$\bullet$};
\node (b') at (0.5,0,2) {$\bullet$};
\node (b'') at (1.5,0,2) {$\bullet$};
\node (a') at (2.5,0,2) {$\bullet$};
\node (a'') at (3.5,0,2) {$\bullet$};

\node (ab) at (6,2,0) {$\phantom{xx}\bullet_{e_{ab}}$};
\node (ac) at (4,2,0) {$\phantom{xx}\bullet_{e_{ac}}$};
\node (ad) at (2,2,0) {$\phantom{xx}\bullet_{e_{ad}}$};
\node (bc) at (0,2,0) {$\phantom{xx}\bullet_{e_{bc}}$};
\node (bd) at (-2,2,0) {$\phantom{xx}\bullet_{e_{bd}}$};
\node (cd) at (-4,2,0) {$\phantom{xx}\bullet_{e_{cd}}$};

\node (ab') at (5.5,2,2) {$\bullet$};
\node (ac') at (3.5,2,2) {$\bullet$};
\node (ad') at (1.5,2,2) {$\bullet$};
\node (bc') at (-0.5,2,2) {$\bullet$};
\node (bd') at (-2.5,2,2) {$\bullet$};
\node (cd') at (-4.5,2,2) {$\bullet$};

\node (ab'') at (6.5,2,2) {$\bullet$};
\node (ac'') at (4.5,2,2) {$\bullet$};
\node (ad'') at (2.5,2,2) {$\bullet$};
\node (bc'') at (0.5,2,2) {$\bullet$};
\node (bd'') at (-1.5,2,2) {$\bullet$};
\node (cd'') at (-3.5,2,2) {$\bullet$};

\node (q) at (1, 4, 0) {$\bullet_q$};

\draw (p)--(a);
\draw (p)--(b);
\draw (p)--(c);
\draw (p)--(d);

\draw (a)--(a');
\draw (b)--(b');
\draw (c)--(c');
\draw (d)--(d');
\draw (a)--(a'');
\draw (b)--(b'');
\draw (c)--(c'');
\draw (d)--(d'');

\draw (ab)--(ab');
\draw (ac)--(ac');
\draw (ad)--(ad');
\draw (bc)--(bc');
\draw (bd)--(bd');
\draw (cd)--(cd');
\draw (ab)--(ab'');
\draw (ac)--(ac'');
\draw (ad)--(ad'');
\draw (bc)--(bc'');
\draw (bd)--(bd'');
\draw (cd)--(cd'');

\draw (ab')--(q);
\draw (ac')--(q);
\draw (ad')--(q);
\draw (bc')--(q);
\draw (bd')--(q);
\draw (cd')--(q);
\draw (ab'')--(q);
\draw (ac'')--(q);
\draw (ad'')--(q);
\draw (bc'')--(q);
\draw (bd'')--(q);
\draw (cd'')--(q);

\draw (a')--(ab);
\draw (a'')--(ab);
\draw (b')--(ab);
\draw (b'')--(ab);

\end{tikzpicture}
\caption{The poset $P_1$. For the sake of visual clarity, with the exception of the lines between $a'$, $a''$, $b'$, $b''$ and $e_{ab}$, the ordering between $\widehat{N}_0$ and $N_1$ is not shown.}
\label{F:P1}
\end{figure}
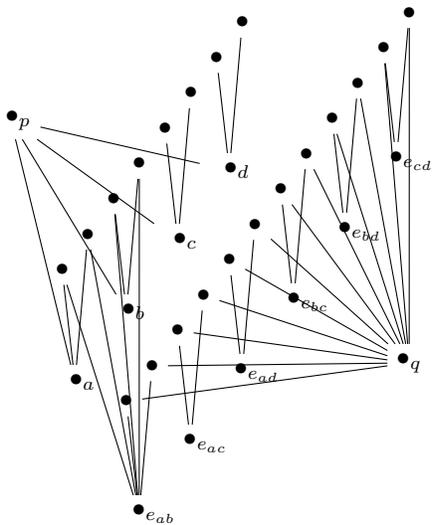

\begin{figure}[htbp]
\setlength{\unitlength}{1.4truemm}
\begin{picture}(150,30)
\put(-10,0){%
\begin{tikzpicture}[scale=.1]
\draw[fill] (-5,25) circle (.5);
\node at (-5.5,27) {$p$};
\draw[fill] (15,0) circle (.5);
\draw[fill] (20,0) circle (.5);
\draw[fill] (25,0) circle (.5);
\draw[fill] (30,0) circle (.5);
\node at (14.5,-2.5) {$a$};
\node at (19.5,-2) {$b$};
\node at (24.5,-2.5) {$c$};
\node at (29.5,-2) {$d$};
\draw[line width=.5pt] (-5,25) -- (15,0);
\draw[line width=.5pt] (-5,25) -- (20,0);
\draw[line width=.5pt] (-5,25) -- (25,0);
\draw[line width=.5pt] (-5,25) -- (30,0);
\draw[fill] (12,25) circle (.5);
\draw[fill] (14,25) circle (.5);
\draw[fill] (18,25) circle (.5);
\draw[fill] (20,25) circle (.5);
\draw[fill] (25,25) circle (.5);
\draw[fill] (27,25) circle (.5);
\draw[fill] (31,25) circle (.5);
\draw[fill] (33,25) circle (.5);
\node at (24,29) {$c'$};
\node at (27.5,29) {$c''$};
\node at (31,29) {$d'$};
\node at (35,29) {$d''$};
\draw[dotted, line width=1pt] (23.5,23.5)--(34.5,23.5)--(34.5,26.5)--(23.5,26.5)--cycle;
\draw[line width=.5pt] (12,25) -- (15,0) -- (14,25);
\draw[line width=.5pt] (18,25) -- (20,0) -- (20,25);
5
\draw[line width=.5pt] (25,25) -- (25,0) -- (27,25);
\draw[line width=.5pt] (31,25) -- (30,0) -- (33,25);
\draw[fill] (53,0) circle (.5);
\draw[fill] (51,25) circle (.5);
\draw[fill] (53,25) circle (.5);
\node at (53,-2.5) {$e_{cd}$};
\draw[line width=.5pt] (25,25) -- (53,0) -- (27,25);
\draw[line width=.5pt] (31,25) -- (53,0) -- (33,25);
\draw[line width=.5pt] (51,25) -- (53,0) -- (53,25);
\draw[fill] (150,0) circle (.5);
\node at (151,3) {$q$};
\draw[line width=.5pt] (12,-5) -- (33,-5) -- (33,3) -- (12,3) -- cycle;
\draw[line width=.5pt] (9,22) -- (38,22) -- (38,31) -- (9,31) -- cycle;
\draw[line width=.5pt] (50,-5) -- (80,-5) -- (80,3) -- (50,3) -- cycle;
\draw[line width=.5pt] (47,22) -- (83,22) -- (83,31) -- (47,31) -- cycle;
\draw[line width=.5pt] (110,-5) -- (140,-5) -- (140,3) -- (110,3) -- cycle;
\draw[line width=.5pt] (110,22) -- (140,22) -- (140,31) -- (110,31) -- cycle;
\draw[line width=.5pt] (110,22) -- (150,0) -- (140,31);
\node at (7,-1) {$N_0$};
\node at (5,27) {$\widehat{N}_0$};
\node at (45,-1) {$N_1$};
\node at (43,27) {$\widehat{N}_1$};
\node at (105,-1) {$N_k$};
\node at (106,27) {$\widehat{N}_k$};
\draw[fill] (90,15) circle (.7);
\draw[fill] (95,15) circle (.7);
\draw[fill] (100,15) circle (.7);
\end{tikzpicture}
}
\end{picture}
\bigskip
\bigskip
\caption{The poset $P_k$. This diagram was generously donated by the anonymous referee.}
\label{F:Pk}
\end{figure}
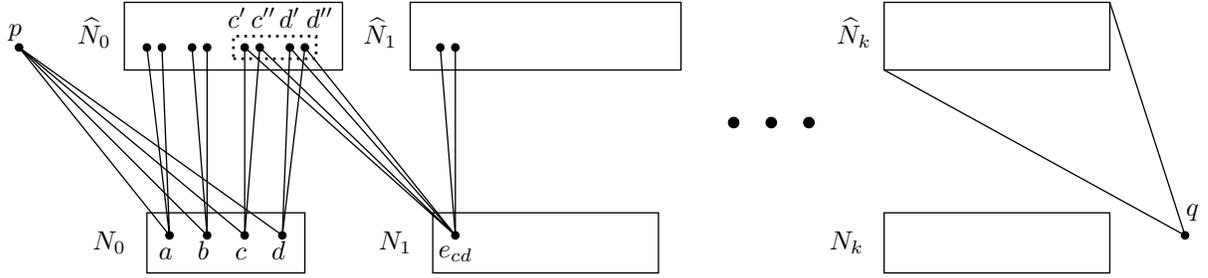

\begin{corollary}\label{C:minmax}
The maximal elements of $P_k$ are precisely the members of $\{p\}\cup\bigcup_{n=0}^k \widehat{N}_n$, and the minimal elements of $P_k$ are precisely the members of $\{q\}\cup\bigcup_{n=0}^k N_n$.
\end{corollary}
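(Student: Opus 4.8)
The plan is to read both statements off the case analysis already carried out in the proof of Lemma~\ref{L:height}. That argument lists, for any strict relation $x<y$ in $P_k$, all the possibilities: $x\in N_0$ and $y=p$; $x\in N_n$ and $y\in\widehat N_n$ for some $n\le k$; $x\in N_n$ and $y\in\widehat N_{n-1}$ for some $1\le n\le k$; or $x=q$ and $y\in\widehat N_k$. Since an element of $P_k$ is maximal exactly when it never occurs as the smaller coordinate $x$ in such a pair, and minimal exactly when it never occurs as the larger coordinate $y$, the corollary reduces to reading which elements occur in each slot.

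Reading off the list, the elements that can occur as a smaller coordinate are precisely those of $\bigcup_{n=0}^k N_n$ (from the first three cases) together with $q$ (from the fourth), so the maximal elements form the complementary set $\{p\}\cup\bigcup_{n=0}^k\widehat N_n$. Dually, the elements that can occur as a larger coordinate are precisely $p$ together with $\bigcup_{n=0}^k\widehat N_n$, so the minimal elements are $\{q\}\cup\bigcup_{n=0}^k N_n$. To make these descriptions tight I would also note the two easy converse checks: every $x\in N_n$ with $n\le k$ really does have something above it, namely $x'\in\widehat N_n$ (which lies in the carrier of $P_k$ precisely because $n\le k$), and $q<x'$ for any $x\in N_k$, so none of these elements is maximal; symmetrically every element of $\widehat N_n$ and the element $p$ has something strictly below it, so none of these is minimal.

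I do not expect a genuine obstacle here: the statement is pure bookkeeping once the order on $P_k$ has been pinned down as in Lemma~\ref{L:height}. The only point that needs a little care is keeping the index ranges straight — in particular that $\widehat N_n\subseteq P_k$ if and only if $n\le k$, which is what prevents elements of $N_k$ from being accidentally maximal and which locates $q$ beneath the top layer $\widehat N_k$ — and this is immediate from the definition of the carrier of $P_k$.
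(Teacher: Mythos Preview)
Your proposal is correct and follows exactly the approach the paper takes: the paper's proof simply says ``This is a restatement of the key observation in the proof of Lemma~\ref{L:height},'' and you have spelled out that restatement in detail, including the easy converse checks that confirm each purportedly non-maximal (resp.\ non-minimal) element really does have something above (resp.\ below) it.
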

\begin{proof}
This is a restatement of the key observation in the proof of Lemma \ref{L:height}.
\end{proof}

\begin{corollary}\label{C:height}
The height of $P_k$ is 2 (here height is defined as being the length of the longest chain).
\end{corollary}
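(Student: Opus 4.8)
The plan is to read off both bounds directly from facts already established. For the upper bound I would invoke Lemma~\ref{L:height}: a chain consisting of three distinct elements would be a triple $x < y < z$ with $x \leq y$ and $y \leq z$, and the lemma forces $x = y$ or $y = z$, contradicting distinctness. Hence every chain in $P_k$ contains at most two elements, so, under the stated convention that the height is the length of the longest chain (a two-element chain having length $2$), the height is at most $2$.

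For the lower bound I would simply exhibit a two-element chain. Since $N_0 = \{a,b,c,d\}$ is part of the carrier of $P_k$ for every $k$, clause (1) of the definition of the order gives $a < p$ with $a \neq p$, so $\{a,p\}$ is a chain of length $2$; by Corollary~\ref{C:minmax} (or again by Lemma~\ref{L:height}) it cannot be extended, but for the lower bound its mere existence suffices. Therefore the height of $P_k$ is at least $2$, and combining the two inequalities it is exactly $2$, uniformly for all $k \in \omega$.

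There is essentially no obstacle here: the corollary is an immediate consequence of Lemma~\ref{L:height} together with the presence of a single comparable pair. The only point requiring a moment's care is keeping the chain-length bookkeeping consistent with the phrasing "length of the longest chain", i.e. that a two-element chain counts as length $2$ while a hypothetical three-element chain would count as length $3$; once that is fixed, each half of the argument is a single line.
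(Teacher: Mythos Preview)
Your proof is correct and follows exactly the paper's approach: the upper bound comes from Lemma~\ref{L:height} (ruling out chains of three distinct elements), and the lower bound from exhibiting any strict pair such as $a<p$. The paper compresses this into a single sentence, but the content is the same.
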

\begin{proof}
This follows from Lemma \ref{L:height} and the fact that $P_k$ contains chains of length $2$.
\end{proof}

\begin{lemma}\label{L:joins}
In $P_k$, the only non-trivial join is $p$, which is the join of every non-singleton subset of $N_0=\{a,b,c,d\}$. No other non-trivial joins are defined in $P_k$.
\end{lemma}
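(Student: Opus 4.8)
The plan is to classify the existing joins of $P_k$, using the fact (Corollary \ref{C:minmax}) that every element of $P_k$ is either maximal (a member of $\{p\}\cup\bigcup_{j=0}^k\widehat{N}_j$) or minimal (a member of $\{q\}\cup\bigcup_{j=0}^k N_j$), and none is both. Note first that $P_k$ has no least element, so $\bigvee\emptyset$ does not exist, and that singleton joins are trivial; hence any non-trivial join is $\bigvee S$ with $|S|\geq2$. I would then make two reductions. If $S$ contains a maximal element $w$, then every upper bound of $S$ lies above $w$, hence equals $w$; so either $S$ has no join or $\bigvee S=w\in S$ is trivial. Thus I may assume $S$ consists of at least two minimal elements. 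For such an $S$, every upper bound of $S$ is maximal, since a minimal upper bound would have to coincide with each element of $S$, contradicting $|S|\geq2$.

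Fix such an $S$ and let $B$ denote the set of all upper bounds of $S$. Distinct maximal elements of $P_k$ are incomparable, so $B$ is an antichain of maximal elements; as an antichain has a least element exactly when $|B|\leq1$, the join $\bigvee S$ exists if and only if $|B|=1$ (when $B=\emptyset$ there is no upper bound at all). In that case $\bigvee S$ is the unique member of $B$, a maximal element, hence not in $S$, so the join is non-trivial. The lemma therefore reduces to the question: for which sets $S$ of at least two minimal elements is $|B|=1$?

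The key lies in two observations read off directly from clauses (1)--(6) of the definition. First, for every $j\leq k$ and every $w\in N_j$, the elements $w'$ and $w''$ have the same strict lower set: each of clauses (2), (3), (4) places an element below $w'$ precisely when it places that same element below $w''$, while clauses (1), (5), (6) contribute nothing to either lower set. Consequently $w'\in B\iff w''\in B$, so once $B$ meets $\bigcup_{j=0}^k\widehat{N}_j$ we have $|B|\geq2$. Second, by clauses (1), (5), (6) the only minimal elements below $p$ are the members of $N_0$, so if $p\in B$ then $S\subseteq N_0$. Combining these: if $|B|=1$, the unique upper bound of $S$ cannot lie in any $\widehat{N}_j$, hence must be $p$, and then $S\subseteq N_0$; together with $|S|\geq2$, this means $S$ is a non-singleton subset of $N_0$.

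Finally I would check the converse, that every non-singleton $S\subseteq N_0$ indeed has $\bigvee S=p$: for distinct $x,y\in N_0$ the maximal elements above $x$ are exactly $p,x',x''$ and those above $y$ are exactly $p,y',y''$, so $p$ is their only common upper bound; hence $B=\{p\}$ whenever $S\subseteq N_0$ with $|S|\geq2$, giving $\bigvee S=p\notin S$. This yields the lemma. The only genuine effort is the bookkeeping behind the two observations — confirming from the six clauses that $w'$ and $w''$ are twins for every $w$, and that $p$ dominates exactly $N_0$ among the minimal elements — after which the incomparability of distinct maximal elements does the rest; in particular $q$ needs no separate treatment, because all of its upper bounds lie in $\widehat{N}_k\subseteq\bigcup_{j=0}^k\widehat{N}_j$.
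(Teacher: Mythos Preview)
Your argument is correct and rests on the same key observation as the paper's proof: every element of $\widehat{N}_n$ has an incomparable twin with identical strict lower set, so no such element can be a least upper bound. The paper compresses this into two sentences by starting from a candidate join $y\neq p$ and invoking the twin directly, whereas you work from the set $S$, reduce to antichains of minimal elements, and explicitly verify the converse that $\bigvee S=p$ for every non-singleton $S\subseteq N_0$; this last check is something the paper leaves implicit, so your version is more complete but not genuinely different in method.
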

\begin{proof}
Let $y\neq p\in P_k$, and let $X\subset P_k$ be non-empty and such that $x<y$ for all $x\in X$. Then we must have $y\in \widehat{N}_n$ for some $0\leq n\leq k$, but then we cannot have $y=\bv X$ because, by construction, $y$ has an incomparable twin with the same lower bounds.
\end{proof}

\begin{corollary}
Every element of $P_k\setminus(\{p\}\cup N_0)$ is join-prime.
\end{corollary}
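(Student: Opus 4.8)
The plan is to read the corollary off from Lemma~\ref{L:joins} together with clause~(1) of the definition of the order on $P_k$. Recall first that an element $j$ of a poset is \emph{join-prime} when, for every subset $X$ such that $\bigvee X$ exists, $j\leq\bigvee X$ implies $j\leq x$ for some $x\in X$; since $P_k$ has no least element, the customary side condition that $j$ not be the bottom element is vacuous. So I would fix $j\in P_k\setminus(\{p\}\cup N_0)$, take $X\subseteq P_k$ with $\bigvee X$ defined, and assume $j\leq\bigvee X$, aiming to produce some $x\in X$ with $j\leq x$.

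Next I would split into cases according to Lemma~\ref{L:joins}. Either $\bigvee X=p$, or the join is trivial, meaning that $X$ has a greatest element $x_0$ and hence $\bigvee X=x_0\in X$; in the latter case $j\leq x_0\in X$ and we are done. The case $\bigvee X=p$ is the one to rule out, and the key observation is that the only elements of $P_k$ lying below $p$ are $p$ itself and the four members of $N_0$ (immediate from clauses~(1) and~(6) of the definition of the order, together with reflexivity). Hence $j\leq\bigvee X=p$ would force $j\in\{p\}\cup N_0$, contradicting the choice of $j$; so this case cannot occur, and $j$ is therefore join-prime.

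I do not expect a genuine obstacle here. The one point to be careful about is that "join-prime" must be understood in the infinitary sense, quantifying over all subsets whose join exists rather than merely over pairs, but this is exactly the setting of Lemma~\ref{L:joins}, so the case split above is exhaustive. It is perhaps worth noting explicitly that $P_k\setminus(\{p\}\cup N_0)$ contains both minimal elements of $P_k$ (the point $q$ and the members of $N_n$ for $1\leq n\leq k$) and maximal elements (the members of $\widehat N_n$ for $0\leq n\leq k$), and that the argument above treats all of these uniformly.
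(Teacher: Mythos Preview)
Your argument is correct and is exactly the deduction the paper intends: the corollary is stated without proof immediately after Lemma~\ref{L:joins}, and your case split (non-trivial join equals $p$, which $j$ cannot lie below, versus trivial join with a maximum in $X$) is the natural way to unpack it. There is nothing to add.
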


We obtain $P_{k+1}$ from $P_{k}$ by adding an extra block, $N_{k+1}\cup \widehat{N}_{k+1}$, between $N_k\cup\widehat{N}_k$ and $q$. Every $N_{k+1}$ contains ${|N_{k}| \choose 2}$ elements, so the structures grow rapidly.

\begin{lemma}\label{L:triples}
Let $k\geq 1$. Then:
\begin{enumerate}
\item If $\Gamma$ is a 3-filter of $P_k$, and $\Gamma$ contains at least 3 elements of $N_n$ (for $0\leq n < k$), then $\Gamma$ contains at least 3 members of $N_{n+1}$. 
\item Let $3\leq m,n \leq \omega$. Suppose $S$ is a three element subset of $N_n$, for some $n<k$. Then the smallest $(m,n)$-filter of $P_k$ containing $S$ contains exactly three elements of $N_{n+1}$.
\end{enumerate}
\end{lemma}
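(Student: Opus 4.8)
The plan is to get part 1 by a direct computation of binary meets in $P_k$, and then bootstrap from it for part 2. Suppose $\Gamma$ is a $3$-filter and $x,y,z$ are three distinct elements of $N_n\cap\Gamma$ with $0\le n<k$. Since $\Gamma$ is up-closed and (by order rule~2) $x<x'$ and $x<x''$, the caps $x',x''$ lie in $\Gamma$, and likewise $y',y'',z',z''\in\Gamma$. The key point is the identity $x'\wedge y'=e_{xy}$, valid whenever $x,y$ are distinct elements of $N_n$ with $n<k$: using the height-$2$ structure (Lemma~\ref{L:height}) and reading off the order rules, the set of elements strictly below $x'$ is exactly $\{x\}\cup\{e_{xt}:t\in N_n\setminus\{x\}\}$, and similarly for $y'$, so the common lower bounds of $x'$ and $y'$ are exactly $\{e_{xy}\}$, whence $e_{xy}$ is their unique, hence greatest, common lower bound. (This is where $n<k$ is used: we need $N_{n+1}$ to exist and order rule~3 to populate the down-sets of caps in $\widehat N_n$ with elements of $N_{n+1}$.) Applying this three times gives $x'\wedge y'=e_{xy}$, $x'\wedge z'=e_{xz}$, $y'\wedge z'=e_{yz}$, all lying in $\Gamma$ by $3$-completeness, and they are three distinct members of $N_{n+1}$ since they are indexed by distinct pairs. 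This proves part~1.

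For part~2, first note that every $(m,n)$-filter with $m,n\ge3$ is $3$-complete and $3$-prime, hence a $3$-filter, so part~1 already yields ``at least three'' elements of $N_{n+1}$. For the reverse bound I would exhibit a single $(m,n)$-filter $\Gamma_0\supseteq S$ with $|\Gamma_0\cap N_{n+1}|=3$; since the smallest $(m,n)$-filter containing $S$ is contained in $\Gamma_0$, this gives ``exactly three''. The filter $\Gamma_0$ is built as a ``tower'': close $S=\{x,y,z\}$ upward, take the three binary meets of part~1 to put the three points $e_{xy},e_{xz},e_{yz}$ into $N_{n+1}$, close \emph{those} upward, take their three binary meets in $N_{n+2}$, and iterate up the levels, bottoming out at $q$ once the caps $\widehat N_k$ are reached (using $q<w'$ for $w\in N_k$, and $w_1'\wedge w_2'=q$ for distinct $w_1,w_2\in N_k$). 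The resulting set contains exactly three elements of each $N_i$ and six of each $\widehat N_i$ for $i$ from the level of $S$ up to $k$, together with $q$ (and with $p$ when $S\subseteq N_0$). One then checks $\Gamma_0$ is genuinely an $(m,n)$-filter: up-closure is built in; $n$-primeness is essentially free because by Lemma~\ref{L:joins} the only non-trivial join of $P_k$ is $p$, the subsets joining to $p$ are exactly the subsets of $N_0$ of size $\ge2$, and each such subset meets $\Gamma_0\cap N_0$, which (when non-empty) contains three of the four elements of $N_0$; and $m$-completeness reduces to checking that no meet of a $\ge2$-element subset of $\Gamma_0$ escapes $\Gamma_0$, where meets of pairs are the ones computed in part~1 (plus the trivial case where one element lies below the other), and meets of three or more of the relevant caps do not exist at all because their common lower bounds are empty.

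I expect the one genuinely delicate step to be this $m$-completeness check, and within it the control of the upward closure: when $S$ sits at a level $n\ge1$, its elements also lie below caps in $\widehat N_{n-1}$, so closing $S$ upward drags those caps into $\Gamma_0$, and one must verify their pairwise meets fall back \emph{inside} $S$ — equivalently, that the combinatorics of $S$ does not force a fourth element into $N_n$ (and hence none into $N_{n+1}$). This is exactly the point at which the hypothesis on $S$ is used; everything else is routine bookkeeping with the explicit description of $P_k$. Finally, once $\Gamma_0$ is known to be an $(m,n)$-filter, one observes that every element placed in $\Gamma_0$ is in fact \emph{forced} into any $(m,n)$-filter containing $S$ (upward closure forces the caps, $3$-completeness forces the meets, and iterating forces $q$), so $\Gamma_0$ is itself the smallest such filter and meets $N_{n+1}$ in exactly the three points $e_{xy},e_{xz},e_{yz}$.
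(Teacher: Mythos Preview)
Your Part~1 is correct and is the paper's argument written out in more detail: the paper simply asserts that $e_{xy},e_{xz},e_{yz}\in\Gamma$ ``by closure under binary meets'', while you actually verify that $x'\wedge y'=e_{xy}$ by computing the down-set of a cap.

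Part~2, however, has a genuine gap precisely at the step you flag as delicate. You write that when $S\subseteq N_n$ with $n\ge 1$, closing upward drags caps from $\widehat N_{n-1}$ into $\Gamma_0$, and ``one must verify their pairwise meets fall back inside $S$ --- \dots\ This is exactly the point at which the hypothesis on $S$ is used.'' But the only hypothesis on $S$ is that it is an arbitrary three-element subset of $N_n$, and that is not enough. Take $k\ge 2$ and the ``star'' $S=\{e_{ab},e_{ac},e_{ad}\}\subset N_1$. Up-closure puts \emph{all} of $\widehat N_0$ into the generated filter (since $e_{ab}<b',b''$, $e_{ac}<c',c''$, $e_{ad}<d',d''$ as well as $a',a''$), and then your own meet computation gives $b'\wedge c'=e_{bc}\notin S$. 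Iterating, the smallest $(m,n)$-filter containing this $S$ contains all of $N_1$, hence all of $\widehat N_1$, hence all $\binom{6}{2}=15$ elements of $N_2$ --- not three. So the verification you propose cannot be carried out, and in fact the statement of Part~2 is false for such $S$.

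For comparison, the paper's own proof of Part~2 is two lines: it asserts without argument that $\Gamma\cap\widehat N_n=\{x',x'',y',y'',z',z''\}$ and deduces $\Gamma\cap N_{n+1}=\{e_{xy},e_{xz},e_{yz}\}$. That assertion is exactly the claim that fails in the star example, so the paper is glossing over the same point you (to your credit) isolate explicitly. What actually matters downstream is only the case $S\subset N_0$, and then inductively the ``triangle'' sets $\{e_{xy},e_{xz},e_{yz}\}$ at higher levels; for triangles the backward caps in $\widehat N_{n-1}$ are exactly $u',u''$ for $u\in\{x,y,z\}$, and their pairwise meets really do land back in $S$. If you restrict Part~2 to $S\subseteq N_0$, or to triangle sets at higher levels, your tower construction goes through and yields the statement the paper actually needs.
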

\begin{proof}
Suppose $\{x,y,z\}\subset \Gamma\cap N_n$. Then $\{x',x'',y',y'',z',z''\}\subset\Gamma$, by up-closure of $\Gamma$, and so $\{e_{xy},e_{xz},e_{yz}\}\subset\Gamma$ by closure under binary meets. For part 2, suppose $S =\{x,y,z\}\subset N_n$, and let $\Gamma$ be the $(m,n)$-filter generated by $S$. Then $\Gamma\cap \widehat{N}_n =\{x',x'',y',y'',z',z''\}$, and so $\Gamma\cap N_{n+1} = \{e_{xy}, e_{xz}, e_{yz}\}$.   
\end{proof}

\begin{proposition}\label{P:main}
Let $x,y\in P_k$ and suppose $x\not\leq y$. Then the following are equivalent: 
\begin{enumerate}
\item $x\in\{p\}\cup N_0$ and $y\in \{q\}\cup \widehat{N}_k$. 
\item There is no $3$-filter containing $x$ but not $y$.
\item There is no $\omega$-filter containing $x$ but not $y$.
\end{enumerate}
\end{proposition}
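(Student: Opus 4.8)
The plan is to establish the cycle $(1)\Rightarrow(2)\Rightarrow(3)\Rightarrow(1)$. The implication $(2)\Rightarrow(3)$ is immediate: every $\omega$-filter is in particular a $3$-filter (it is up-closed, closed under binary meets as a special case of finite meets, and $3$-prime as a special case of $\omega$-primeness), so if no $3$-filter contains $x$ but not $y$ then certainly no $\omega$-filter does.

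For $(1)\Rightarrow(2)$, let $\Gamma$ be a $3$-filter with $x\in\Gamma$; I want $y\in\Gamma$. Since every element of $N_0$ lies below $p$, up-closure forces $p\in\Gamma$ whether $x=p$ or $x\in N_0$. By Lemma \ref{L:joins} we have $p=\bv\{u,v\}$ for every two-element subset $\{u,v\}\subseteq N_0$, so $3$-primeness applied to each of these six pairs shows that $\Gamma$ meets every pair from $N_0$, whence $|\Gamma\cap N_0|\geq 3$. Iterating Lemma \ref{L:triples}(1) down through the levels gives $|\Gamma\cap N_k|\geq 3$; picking distinct $z_1,z_2\in\Gamma\cap N_k$, up-closure puts $z_1',z_2'$ in $\Gamma$, and the height-$2$ structure together with rule 4 shows that $q$ is the unique common lower bound of $z_1'$ and $z_2'$, so $z_1'\wedge z_2'=q\in\Gamma$ by closure under binary meets. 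Finally $q$ lies below every element of $\widehat{N}_k$, so up-closure gives $\widehat{N}_k\subseteq\Gamma$; as $y\in\{q\}\cup\widehat{N}_k$ we conclude $y\in\Gamma$.

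For $(3)\Rightarrow(1)$ I would prove the contrapositive: given $x\not\leq y$ with $(1)$ false, produce an $\omega$-filter containing $x$ but not $y$. If $x\notin\{p\}\cup N_0$, take $\Gamma=x^\uparrow$: it is automatically up-closed and closed under every existing meet (any lower bound of a subset of $x^\uparrow$ is $\geq x$), and it is $\omega$-prime because by Lemma \ref{L:joins} the only non-trivial join in $P_k$ is $p$, which is not in $x^\uparrow$ since $x\not\leq p$; and $y\notin x^\uparrow$ as $x\not\leq y$. Otherwise $x\in\{p\}\cup N_0$ and $y\notin\{q\}\cup\widehat{N}_k$; here I would use the ``triangle'' filters $\Gamma_e$ (for $e\in N_0$), where $\Gamma_e$ is the smallest $\omega$-filter containing the three-element set $N_0\setminus\{e\}$. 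Using Lemma \ref{L:triples}(2) one checks that $\Gamma_e$ is indeed an $\omega$-filter and that, for each $n<k$, $\Gamma_e\cap N_n$ is a specific three-element set $S_n^{(e)}$ (with $S_{n+1}^{(e)}=\{e_{\alpha\beta}:\alpha,\beta\in S_n^{(e)}\}$) and $\Gamma_e\cap\widehat{N}_n=\{w',w'':w\in S_n^{(e)}\}$, while $\Gamma_e$ also contains $p$, $q$ and all of $\widehat{N}_k$. The key point is the identity
\[
\bigcap\{\Gamma_e:e\in N_0,\ x\in\Gamma_e\}=x^\uparrow\cup\{q\}\cup\widehat{N}_k .
\]
Writing $E=\{e\in N_0:x\in\Gamma_e\}$ (which has three or four elements according as $x\in N_0$ or $x=p$), this reduces to the combinatorial fact that $\bigcap_{e\in E}S_0^{(e)}=N_0\setminus E$ and $\bigcap_{e\in E}S_n^{(e)}=\emptyset$ for $n\geq 1$, which follows by induction on $n$: membership $e_{\alpha\beta}\in S_{n+1}^{(e)}$ requires $\alpha\in S_n^{(e)}$, so $e_{\alpha\beta}\in\bigcap_{e\in E}S_{n+1}^{(e)}$ would force $\alpha\in\bigcap_{e\in E}S_n^{(e)}$, which is impossible since $\bigcap_{e\in E}S_0^{(e)}$ has at most one element (hence cannot contain the distinct pair $\alpha,\beta$) and $\bigcap_{e\in E}S_n^{(e)}=\emptyset$ for $n\geq 1$ by the inductive hypothesis. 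Since $y\notin x^\uparrow$ (as $x\not\leq y$) and $y\notin\{q\}\cup\widehat{N}_k$, the element $y$ avoids the above intersection, so $y\notin\Gamma_e$ for some $e\in E$, and this $\Gamma_e$ contains $x$; it is the required $\omega$-filter.

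The step I expect to be the main obstacle is the bookkeeping inside the second case of $(3)\Rightarrow(1)$: determining exactly which meets exist in $P_k$ (so that one can be sure the $\Gamma_e$ are closed under all finite meets, not merely binary ones, and hence are genuine $\omega$-filters) and computing $\Gamma_e$ level by level from Lemma \ref{L:triples}. Once the sets $S_n^{(e)}$ are under control, the inductive vanishing of their common intersections — the real content of the argument — is short.
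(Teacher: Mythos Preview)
Your proof is correct and follows the same cycle $(1)\Rightarrow(2)\Rightarrow(3)\Rightarrow(1)$ as the paper, with essentially identical arguments for $(1)\Rightarrow(2)$, $(2)\Rightarrow(3)$, and the first subcase of $(3)\Rightarrow(1)$.

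The only noteworthy difference is in the second subcase of $(3)\Rightarrow(1)$. Both you and the paper introduce the $\omega$-filters $\Gamma_e$ generated by $N_0\setminus\{e\}$ and use Lemma~\ref{L:triples}(2) to determine $\Gamma_e\cap N_n$ level by level; the paper then records the ``key observation'' that $e_{uv}\in\Gamma_e$ forces $u,v\in\Gamma_e$ and leaves to the reader the choice of $e$ excluding a given $y$. You instead package this step by computing the intersection $\bigcap_{e\in E}\Gamma_e=x^\uparrow\cup\{q\}\cup\widehat{N}_k$ and reading off the existence of a suitable $e$ from $y$ not lying in that set. This is the same underlying combinatorics (your recursion $S_{n+1}^{(e)}=\{e_{\alpha\beta}:\alpha,\beta\in S_n^{(e)}\}$ is exactly the paper's key observation), but your version makes the conclusion explicit where the paper is content with a sketch. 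The obstacle you anticipate---checking that the $\Gamma_e$ really are $\omega$-filters---is minor: closure under finite meets is by construction, and $\omega$-primeness holds because, by Lemma~\ref{L:joins}, the only nontrivial join is $p$ and $\Gamma_e$ contains three of the four elements of $N_0$.
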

\begin{proof}
First we show $1.\implies 2.$ directly. If $\Gamma$ is a $3$-filter containing any one of $N_0=\{a,b,c,d\}$, then it must also contain $p$ by up-closure. So, by the $3$-primality $\Gamma$ it must also contain (at least) three members of $N_0$. So by Lemma \ref{L:triples}(1) it must contain at least 3 members of $N_k$, and thus, by up-closure and closure under binary meets, it must also contain $q$, and hence, by up-closure, also every element of $\widehat{N}_k$. 

That $2.\implies 3.$ is automatic, so we show $3.\implies 1.$ by proving the contrapositive. If $x\notin(\{p\}\cup N_0)$, then $x^\uparrow$ is an $\omega$-filter containing $x$ but not $y$, as $x$ is join-prime. If $x\in (\{p\}\cup N_0)$ but $y\notin \{q\}\cup \widehat{N}_k$, we can construct an $\omega$-filter $\Gamma$ containing $x$ but not $y$ by making a suitable choice for which element of $N_0$ is left out of $\Gamma$. To see this note that if we choose $z\in N_0$ and let $X=\{p\}\cup N_0\setminus\{z\}$ then there is a smallest $\omega$-filter containing $X$, $\Gamma_X$ say, generated deterministically by alternating closing upwards and closing under meets. It follows from Lemma \ref{L:triples}(2) that $\Gamma_X$ will contain exactly 3 elements of $N_n$ for all $0\leq n \leq k$. The key observation then is that, if $e=e_{uv}\in N_n$ for some $u,v\in N_{n-1}$, and either $u\notin \Gamma_X$ or $v\notin\Gamma_X$, then $e\notin\Gamma_X$.
\end{proof}

\begin{corollary}
$P_k$ is not 3-representable for all $k\in\omega$.
\end{corollary}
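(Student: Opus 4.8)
The plan is to combine Proposition~\ref{P:main} with the separation characterization of representability in Theorem~\ref{T:rep}, specialized to the case $\alpha=\beta=3$. Recall that, by Theorem~\ref{T:rep}, $P_k$ is $3$-representable if and only if every pair $x\not\leq y$ in $P_k$ can be separated by a $3$-filter containing $x$ but not $y$; so it suffices to exhibit a single pair that cannot be so separated, and the argument will be uniform in $k$ because Proposition~\ref{P:main} is stated for all $k$.

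The natural candidate is the pair $x=p$, $y=q$. First I would check that $p\not\leq q$: by Corollary~\ref{C:minmax}, $p$ is a maximal element and $q$ a minimal element of $P_k$, and since $p\neq q$ this forces $p\not\leq q$ (equivalently, one observes directly that no clause in the definition of the order on $P_k$ other than reflexivity relates $p$ and $q$). Since $p\in\{p\}\cup N_0$ and $q\in\{q\}\cup\widehat{N}_k$, this pair satisfies clause~(1) of Proposition~\ref{P:main}; hence, by the implication $(1)\implies(2)$ of that proposition, there is no $3$-filter of $P_k$ containing $p$ but not $q$. By Theorem~\ref{T:rep} this means precisely that $P_k$ is not $3$-representable, which is the statement of the corollary. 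If one wants the remark made in Section~\ref{S:result} that $P_k$ fails to be $(m,n)$-representable for all $m,n\geq 3$, just note that any $(m,n)$-filter with $m,n\geq 3$ is in particular a $3$-filter (closure under meets and primality for cardinalities $<m$, $<n$ subsume the cardinality-$\leq 2$ conditions defining a $3$-filter), so the same unseparated pair $p\not\leq q$ does the job.

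I do not expect any real obstacle here: the substance of the argument is entirely contained in Proposition~\ref{P:main}, and this corollary is just the translation of that proposition through Theorem~\ref{T:rep}. The only points deserving a moment's care are confirming that $p$ and $q$ are genuinely incomparable and that they fall under the hypothesis of clause~(1); incidentally, $x=p$ could be replaced by any element of $N_0$ and $y=q$ by any element of $\widehat{N}_k$, so there are many witnessing pairs, but one suffices.
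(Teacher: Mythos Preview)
Your proposal is correct and follows exactly the route the paper takes: the paper's proof is the single line ``This is a trivial consequence of Proposition~\ref{P:main}'', and you have simply unpacked that triviality by exhibiting the witnessing pair $p\not\leq q$ and invoking Theorem~\ref{T:rep}. No further work is needed.
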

\begin{proof}
This is a trivial consequence of Proposition \ref{P:main}.
\end{proof}

Given $k\in \omega$, we can define a map $\iota_k:P_k\to P_{k+1}$. Here $P_k$ and $P_{k+1}$ are constructed as described at the beginning of this section. We assume that the carriers of $P_k$ and $P_{k+1}$ are disjoint, and we will distinguish elements of $P_{k+1}$ from their counterparts in $P_k$ by using an underline. So, $P_{k+1}$ is constructed recursively by starting with the base $\underline{N}_0=\{\underline{a},\underline{b},\underline{c},\underline{d}\}$, defining \[\underline{N}_{n+1} = \{\underline{e}_{\underline{x}\underline{y}}:\underline{x} \text{ and } \underline{y} \text{ are distinct elements of } \underline{N}_n\},\]   
and defining 
\[\underline{\widehat{N}}_n=\bigcup_{\underline{x}\in\underline{N}_n} \{\underline{x}', \underline{x}''\}.\]
The carrier of $P_{k+1}$ is then \[\{\underline{p},\underline{q}\}\cup \bigcup_{n=0}^{k+1} \underline{N}_n \cup \bigcup_{n=0}^{k+1} \underline{\widehat{N}}_n.\]
We define the order on $P_{k+1}$ in the obvious way, and can now define $\iota_k$ recursively as follows:
\begin{itemize}
\item $\iota_k(x) = 
\begin{cases}\underline{x} \text{ when } x\in N_0 \\ 
\underline{p} \text{ when } x=p \\
\underline{q} \text{ when } x=q 
\end{cases}$
\item Assuming $\iota_k$ has been defined on $N_n$ for $n< k$, let $x,y\in N_n$. We define $\iota_k$ on $N_{n+1}$ by $\iota_k(e_{xy}) = \underline{e}_{\iota_k(x)\iota_k(y)} = \underline{e}_{\underline{x}\underline{y}}$.   
\item Assuming $\iota_k$ has been defined on $N_n$ for all $n\leq k$, let $x\in N_m$ for $m\leq k$. We define $\iota_k$ on $\widehat{N}_m$ by $\iota_k(x')= \iota_k(x)'=\underline{x}'$, and $\iota_k(x'') = \iota_k(x)''=\underline{x}''$. 
\end{itemize}

So $\iota_k$ is essentially the inclusion of $P_k$ into $P_{k+1}$, modulo the fact that we require the carriers to be distinct. Now, given $k\leq l \in \omega$, we define the map $\iota_{kl}:P_k\to P_{l+1}$ to be the composition $\iota_l\circ\ldots \circ\iota_{k+1}\circ \iota_k$. The map $\iota_{kl}$ is almost an order embedding between $P_k$ and $P_{l+1}$, but fails to be because the order between $\widehat{N}_k$ and $q$ in $P_k$ does not translate into an order between $\iota_{kl}[\widehat{N}_k]$ and $\iota_{kl}(q)$ in $P_{l+1}$. 

Consider now the ultraproduct $\UPPi$, where $U$ is some non-principal ultrafilter over $\omega$. Given $k\in \omega$, and an element $x\in P_k$, we define $\bar{x}$ to be the sequence $(x,\iota_{kk}(x),\iota_{k(k+1)}(x),\ldots)\in\prod_{l=k}^\omega P_l$. Since $\bar{x}$ has terms in all $P_m$ where $m\geq k$, it follows that $\bar{x}$ defines an element $[\bar{x}]$ of $\UPPi$. To revisit the analogy between the maps $\iota_{kl}$ and inclusion functions, the map taking $x\in P_k$ to $[\bar{x}]$ can be thought of as an inclusion of $P_k$ into $\UPPi$. 

\begin{lemma}\label{L:up}
Let $x\in P_k\setminus\{q\}$, and let $[y]\in\UPPi$. Then $[\bar{x}]<[y]$ if and only if one of the following is true:
\begin{enumerate}
\item $x\in N_0$ and $[y]=[\bar{p}]$, 
\item $x\in N_n$ for some $n$, and $[y]\in\{[\bar{x}'],[\bar{x}'']\}$, or
\item $x=e_{uv}\in N_{n+1}$ for some $u,v\in N_n$, and $[y]\in\{[\bar{u}'],[\bar{u}''],[\bar{v}'],[\bar{v}'']\}$.
\end{enumerate} 
\end{lemma}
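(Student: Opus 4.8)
The plan is to prove both implications of the biconditional by unwinding the definitions of the ultraproduct order and the maps $\iota_{kl}$, reducing everything to the order on the coordinate posets $P_l$. Recall that $[\bar{x}]\leq[y]$ in $\UPPi$ if and only if $\{l : \bar{x}(l)\leq y(l)\text{ in }P_l\}\in U$, and that for $l\geq k$ the $l$-th term of $\bar{x}$ is $\iota_{kl}(x)$ (with $\iota_{kk}$ interpreted as $\iota_k$, and for $l = k$ the first term being $x$ itself). So the task is to understand, for each fixed representative $y$ and for $U$-almost all $l$, which elements of $P_l$ lie strictly above $\iota_{kl}(x)$, and to match this up with the three listed cases.

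First I would handle the ``if'' direction, which is the easy one. In each of the three cases, one checks directly from the construction of $P_l$ (for $l$ large enough that all the relevant elements exist, which is a cofinite, hence $U$-large, set of $l$) that $\iota_{kl}(x) < \iota_{kl}(x')$, etc.: case~(1) is the relation $x < p$ lifted along $\iota$, case~(2) is $x < x'$ and $x < x''$ lifted along $\iota$, and case~(3) is the relation $e_{uv} < u', u'', v', v''$ lifted along $\iota$. Since $\iota_{kl}$ is built by composing the maps $\iota_j$, each of which preserves all these particular order relations (the only relation $\iota$ fails to preserve is the one between $\widehat{N}_k$ and $q$, which is not among those listed), the relation holds coordinatewise for a cofinite set of $l$, hence in the ultraproduct. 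One subtlety: the displayed cases assert $[y]$ equals a specific element $[\bar{x}']$ etc.; so strictly one shows that if $[y]$ is one of these then $[\bar x] < [y]$, using that these elements are genuinely distinct from $[\bar x]$ (two elements $[\bar u], [\bar v]$ of the ultraproduct are equal iff $u = v$ once $k$ is large enough and the labels agree — this should be noted, perhaps citing or reproving that $[\bar{\cdot}]$ is injective).

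The ``only if'' direction is the main work, and where I expect the real obstacle. Suppose $[\bar{x}] < [y]$. Then for $U$-almost all $l$, $\iota_{kl}(x) < y(l)$ in $P_l$, and in particular $\iota_{kl}(x)$ is not maximal in $P_l$; by Corollary~\ref{C:minmax}, $\iota_{kl}(x) \in \{\underline{q}_l\} \cup \bigcup_n \underline{N}_n$ in $P_l$, consistent with $x \in P_k \setminus \{q\}$ forcing $x \in \bigcup_n N_n$ (we have excluded $q$ by hypothesis, and note $\iota$ never maps into the $q$ of a later poset). Now I would split on which block $N_n$ contains $x$. For each fixed $l$ in the $U$-large set, the set of upper bounds of $\iota_{kl}(x)$ in $P_l$ is explicitly one of: $\{p_l\}$ if $x\in N_0$; $\{\iota_{kl}(x)', \iota_{kl}(x)''\}$ together with the two ``twins'' coming from the element of $N_{n+1}$ above it — wait, more precisely, by the order axioms the upper bounds of an element $w\in N_n$ ($n\geq 1$) in $P_l$ are exactly $\{w', w''\}$ plus, if $w = e_{uv}$, also $\{u', u'', v', v''\}$; and the upper bounds of an element of $N_0$ are $\{p\}$ plus its own primed versions — one must be careful to read off the genuinely correct up-set from axioms (1)--(6) of the construction. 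The point is that this up-set is finite with a bounded number of elements independent of $l$, and each of its members is of the form $\bar{z}(l)$ for the appropriate $z\in P_k$ from the list. Since there are only finitely many ``patterns'' available, by the ultrafilter being an ultrafilter (finite partition property) there is a single $z$ from the list such that $y(l) = \bar{z}(l)$ for $U$-many $l$, giving $[y] = [\bar z]$ as required. The main obstacle is bookkeeping: making sure that when $x = e_{uv}\in N_{n+1}\subseteq N_k$ (the top block of $P_k$), the elements $u', u'', v', v''$ of $\widehat{N}_n$ in $P_l$ really are equal to $\bar{u}'(l)$ etc. — i.e. that $\iota_{kl}$ commutes appropriately with the prime/double-prime operations and with the formation of $e_{uv}$, which follows from the recursive definition of $\iota_k$ but needs to be stated cleanly; and double-checking the edge case $x\in N_k$, where in $P_k$ itself $x$ has $q$ as no upper bound but in $P_{k+1}$ and beyond the block $\underline N_{k+1}$ sits above, so no spurious upper bounds are introduced by the ``missing'' $\widehat{N}_k$--$q$ relation.
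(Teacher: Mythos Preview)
Your proposal is correct and follows essentially the same approach as the paper: dismiss the ``if'' direction as routine, then for ``only if'' argue that $x$ must lie in some $N_n$ (hence has only finitely many strict upper bounds, independent of $l$), and use primality of the ultrafilter (your finite partition property) to force $[y]=[\bar z]$ for one of the listed $z$. The paper's proof is simply a terser version of your outline, citing Corollaries~\ref{C:minmax} and~\ref{C:height} for the first step and then invoking finiteness of the up-set plus ultrafilter primality in one line, without your explicit bookkeeping about $\iota_{kl}$.
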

\begin{proof}
The `if' part is trivial, so we prove `only if'. Since $[\bar{x}]<[y]$ we must have $x\in N_n$ for some $n$ by corollaries \ref{C:minmax} and \ref{C:height}. This follows because $\bar{x}(i) = x$ on a large set for some $x$. We also have $y(i)>\bar{x}(i)$ on a large set, and so, by closure of ultrafilters under finite meets, we have $x < y(i)$ on a large set. 

Thus $x$ is a minimal element that is not $q$, and so must be in $N_n$ for some $n$. Now, if $x\in N_n$, then it has a finite set of upper bounds, and thus, by primality of ultrafilters, $[y]$ must be $[\bar{z}]$ for some $z> x$. So we must have either 1., 2. or 3. as required. 
\end{proof}

\begin{proposition}
$\UPPi$ is $\omega$-representable.
\end{proposition}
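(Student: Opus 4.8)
The plan is to verify the separation criterion of Theorem~\ref{T:rep} with $\alpha=\beta=\omega$: given $[x],[y]\in\UPPi$ with $[x]\not\leq[y]$, I must exhibit an $\omega$-filter $\Gamma$ of $\UPPi$ containing $[x]$ but not $[y]$. Fix such $[x],[y]$ and set $u=\{i\in\omega:x(i)\not\leq y(i)\}\in U$. By Proposition~\ref{P:main}, applied inside each $P_i$, every $i\in u$ lies in exactly one of two classes: those for which some $\omega$-filter of $P_i$ separates $x(i)$ from $y(i)$, and those for which $x(i)\in\{p\}\cup N_0$ and $y(i)\in\{q\}\cup\widehat{N}_i$. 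Writing $u=u_A\sqcup u_B$ accordingly, primality of $U$ gives $u_A\in U$ or $u_B\in U$.

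If $u_A\in U$ I am finished at once: picking for each $i\in u_A$ an $\omega$-filter $\Gamma_i$ of $P_i$ with $x(i)\in\Gamma_i$ and $y(i)\notin\Gamma_i$, Lemma~\ref{L:filters} with $m=n=\omega$ produces the desired $\omega$-filter of $\UPPi$. The work is the case $u_B\in U$, which I would treat by hand. As $\{p\}\cup N_0$ is a fixed five-element set, there is $v\in\{p\}\cup N_0$ with $\{i:x(i)=v\}\in U$, so $[x]=[\bar v]$. I then fix a three-element $M_0\subseteq N_0$ with $v\in M_0$ (any three if $v=p$), let $M\subseteq\bigcup_n N_n$ be the set generated by $M_0$ under the rule $M\cap N_{n+1}=\{e_{st}:s,t\in M\cap N_n,\ s\neq t\}$ (so $|M\cap N_n|=3$ for all $n$, since $\binom{3}{2}=3$), and put
\[\Gamma=\{[\bar p]\}\cup\bigcup_{e\in M}\bigl\{[\bar e],[\bar e'],[\bar e'']\bigr\}.\]
This contains $[\bar v]$, and I claim it is an $\omega$-filter missing $[y]$.

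Up-closure of $\Gamma$ is read off Lemma~\ref{L:up}: the elements strictly above a minimal member $[\bar e]\in\Gamma$ are $[\bar e'],[\bar e'']$, plus $[\bar p]$ if $e\in N_0$, plus $[\bar u'],[\bar u''],[\bar v'],[\bar v'']$ if $e=e_{uv}$ (and then $u,v\in M$ by the closure rule), all lying in $\Gamma$; while $[\bar p]$ and the $[\bar e'],[\bar e'']$ are maximal in $\UPPi$. For $\omega$-primality, \L o\'s' theorem with Lemma~\ref{L:joins} shows the only non-trivial finite joins in $\UPPi$ are the instances $[\bar p]=\bigvee S$ with $S$ a non-singleton subset of $\{[\bar a],[\bar b],[\bar c],[\bar d]\}$, and $\Gamma$ contains three of these four elements, so meets every such $S$. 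For $\omega$-completeness I would compute finite meets: any meet involving a minimal member $[\bar e]\in\Gamma$ collapses to that $[\bar e]$, and a meet of finitely many of the maximal members $[\bar p],[\bar e'],[\bar e'']$, if it exists, is $[\bar p]$ or some $[\bar e]$ with $e\in M$ — the governing computation being $[\bar e']\wedge[\bar f']=[\overline{e_{ef}}]$ for distinct same-level $e,f\in M$, an element the closure rule keeps inside $M$. Finally $[y]\notin\Gamma$: on a set in $U$ either $y(i)=q$, so $[y]=[\bar q]$, which is minimal and lies in no $N_n$, hence is none of the listed members of $\Gamma$; or $y(i)\in\widehat{N}_i$, so $[y]$ is maximal with exactly two elements below it (namely $[\bar q]$ and the ``diagonal'' $[w]$ with $w(i)\in N_i$), hence is not $[\bar p]$ or any $[\bar e']$ (both of which have at least four elements below) and not a minimal $[\bar e]$.

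I expect the genuine obstacle to be the $\omega$-completeness check — confirming that no finite meet of members of $\Gamma$ ever lands on $[\bar q]$ or on a ``sibling'' of $N_{n+1}$ outside $M$. The conceptual point, which I would highlight as the crux, is that in each $P_k$ the element $q$ is forced into a $3$-filter only because $N_k$ is the top $N$-level of $P_k$; in the ultraproduct no fixed level is ever the top one, so the deterministic ``three elements per level'' closure of Lemma~\ref{L:triples}, run through all of $\omega$, stays self-contained and never reaches $[\bar q]$.
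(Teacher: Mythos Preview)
Your argument is correct, and in the easy case ($u_A\in U$) it coincides with the paper's. In the hard case ($u_B\in U$) you take a genuinely different route. The paper simply sets
\[
\Gamma=\bigcup_{k\in\omega}\{[\bar z]:z\in P_k\setminus\{q\}\},
\]
i.e.\ the set of \emph{all} ``eventually constant'' classes except $[\bar q]$, and then reads both up-closure and meet-closure directly off Lemma~\ref{L:up} (any finite meet of such classes, when it exists, is again of the form $[\bar w]$ with $w\neq q$, since in each coordinate it is computed inside the fixed finite set $\{z:z\in P_k\setminus\{q\}\}$). Primality is immediate because every nontrivial joinand lies in $N_0\subseteq\Gamma$.

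Your filter is a carefully chosen \emph{thin} subfilter of this, obtained by propagating a three-element seed $M_0\subseteq N_0$ through the rule $M\cap N_{n+1}=\{e_{st}:s,t\in M\cap N_n\}$. This is more work---the meet-closure check genuinely requires the case analysis you outline, and your informal ``if it exists, is $[\bar p]$ or some $[\bar e]$ with $e\in M$'' needs the observation that meets of maximal members from \emph{different} levels, or of $[\bar e']$ with $[\bar e'']$, simply fail to exist---but it has the virtue of making explicit the conceptual point you flag at the end: the obstruction to $3$-representability in $P_k$ is that the deterministic three-per-level closure of Lemma~\ref{L:triples} eventually hits the top level $N_k$ and is forced down to $q$, whereas in the ultraproduct there is no top level, so the same closure runs forever without ever reaching $[\bar q]$. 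The paper's larger $\Gamma$ obscures this mechanism but makes the verification shorter.
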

\begin{proof}
Let $[x],[y]\in \UPPi$ and suppose $[x]\not\leq[y]$. Suppose first that \[\{i\in \omega: x(i)\not\in\{p\}\cup N_0\text{ or }y(i)\not\in \{q\}\cup \widehat{N}_i\}\in U.\] Then, by Proposition \ref{P:main} and Lemma \ref{L:filters}, there is an $\omega$-filter of $\UPPi$ containing $[x]$ but not $[y]$. Suppose instead that 
\[\{i\in \omega: x(i)\in\{p\}\cup N_0\text{ and }y(i)\in \{q\}\cup \widehat{N}_i\}\in U.\]  
We define $\Gamma\subset\UPPi$ by $\Gamma=\bigcup_{k\in\omega}\{[\bar{z}]:z\in P_k\setminus\{{q\}}\}$. We claim that $\Gamma$ is an $\omega$-filter of $\UPPi$. That $\Gamma$ is up-closed and closed under existing finite meets follows from Lemma \ref{L:up}, and that its complement is closed under existing finite joins follows from the primality of ultrafilters and Lemma \ref{L:joins}.

Now, since $\{i\in \omega: x(i)\in\{p\}\cup N_0\}\in U$, we must have $[x]=[\bar{z}]$ for some $z\in \{p\}\cup N_0\subset P_0$, by primality of ultrafilters, and so $[x]\in\Gamma$. Since $\{i\in \omega:y(i)\in \{q\}\cup \widehat{N}_i\}\in U$, we must have $[y]\neq[\bar{z}]$ for all $z\in P_k\setminus\{q\}$, for all $k\in\omega$, and so $[y]\not\in\Gamma$. The result then follows from Theorem \ref{T:rep}. 
\end{proof}

\begin{theorem}
For all $m,n$ with $3\leq m,n \leq \omega$, the class of $(m,n)$-representable posets is not finitely axiomatizable.
\end{theorem}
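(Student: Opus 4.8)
The plan is to assemble the ingredients already established into the standard non-finite-axiomatizability argument via \L o\'s' theorem. Recall that for $3 \leq m,n \leq \omega$ the class of $(m,n)$-representable posets is elementary by \cite[Theorem 4.5]{Egr16}. Hence it is finitely axiomatizable if and only if its complement is also elementary, which by \L o\'s' theorem would force the complement to be closed under ultraproducts. So it suffices to exhibit a sequence of posets in the complement whose ultraproduct is not in the complement, i.e. a sequence of posets none of which is $(m,n)$-representable but which has an $(m,n)$-representable ultraproduct.

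First I would observe that the sequence $(P_k : k \in \omega)$ constructed above does exactly this, uniformly in $m$ and $n$. By the corollary to Proposition \ref{P:main}, each $P_k$ fails to be $3$-representable; since any $(m,n)$-representation with $m,n \geq 3$ would in particular be a representation preserving meets and joins of size $<3$ (and $(3,3)$-representability implies the separation condition of Theorem \ref{T:rep} for $3$-filters, which $P_k$ violates), each $P_k$ fails to be $(m,n)$-representable for every $m,n$ with $3 \leq m,n \leq \omega$. Thus each $P_k$ lies in the complement of the class of $(m,n)$-representable posets. Then I would take $U$ to be any non-principal ultrafilter over $\omega$ and invoke the preceding proposition: $\prod_U P_i$ is $\omega$-representable, and $\omega$-representability implies $(m,n)$-representability for all $3 \leq m,n \leq \omega$ (any field of sets witnessing preservation of all finite meets and joins a fortiori witnesses preservation of those of size $<m$ and $<n$). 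So $\prod_U P_i$ lies outside the complement.

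Putting this together: the complement of the class of $(m,n)$-representable posets contains every $P_k$ but not their ultraproduct $\prod_U P_i$, so it is not closed under ultraproducts, hence not elementary. Since the class of $(m,n)$-representable posets \emph{is} elementary, and an elementary class is finitely axiomatizable precisely when its complement is elementary, we conclude that the class of $(m,n)$-representable posets is not finitely axiomatizable, for every pair $m,n$ with $3 \leq m,n \leq \omega$.

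There is no real obstacle remaining at this stage — the substantive work was done in Proposition \ref{P:main} (showing the $P_k$ are not $3$-representable) and in the proposition showing $\prod_U P_i$ is $\omega$-representable. The only points needing a line of care are the two monotonicity remarks: that failure of $3$-representability propagates upward to failure of $(m,n)$-representability for all larger $m,n$, and dually that $\omega$-representability propagates downward; both are immediate from the definitions, since shrinking $\alpha$ or $\beta$ only weakens the constraints in Definition \ref{D:rep}. I would state the theorem's proof in two or three sentences citing Proposition \ref{P:main}, the $\omega$-representability of the ultraproduct, elementarity of the class, and \L o\'s' theorem.
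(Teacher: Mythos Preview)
Your proposal is correct and follows essentially the same route as the paper: the paper's proof is a two-sentence appeal to \L o\'s' theorem, observing that the complement of the class is not closed under ultraproducts (since each $P_k$ lies in it but $\prod_U P_i$ does not) and hence is not elementary, whence the class is not finitely axiomatizable. You spell out a bit more than the paper does (the monotonicity of $(m,n)$-representability in $m,n$, and the elementarity of the class), but these are exactly the implicit steps the paper relies on, so the arguments coincide.
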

\begin{proof}
We have shown that the complement of the class is not closed under ultraproducts, and thus cannot be elementary, by \L o\'s' theorem. Hence the class of $(m,n)$-representable posets cannot be finitely axiomatized.
\end{proof}

We note that the faint possibility remains that $(m,n)$-representability is finitely axiomatizable over the class of \emph{finite} posets. An axiomatization using only a finite number of variables would also be sufficient for a polynomial time decision algorithm \cite[Proposition 3.1]{Var95}. So the study of first-order axioms for classes of $(m,n)$-representable posets remains somewhat relevant to the $\mathbf{P}$ vs. $\mathbf{NP}$ question, though most reasonable people would presumably take this connection as powerful evidence that such axiomatizations do not exist.

\bibliographystyle{spmpsci}

\end{document}